\newtheorem{rmk}{Remark}[section]
\numberwithin{equation}{section}
\newtheorem*{theorem*}{Theorem}
\newtheorem{theorem}{Theorem}[section]
\newtheorem{corollary}[theorem]{Corollary}
\def\p{\partial}
\def\i{\sqrt{-1}}
\def\o{\omega}
\def\cH{{\mathcal H}}
\def\cI{{\mathcal I}}
\def\tr{\mathop{\rm tr}\nolimits}
\title{Compactness of K\"ahler metrics with bounds on  Ricci curvature and $\mathcal I$ functional}
\author{Xiuxiong Chen, Tam\'as Darvas, Weiyong He}
\date{}
\begin{document}

\maketitle
\begin{abstract}We prove a compactness theorem for K\"ahler metrics with various bounds on Ricci curvature and the $\mathcal I$ functional. We explore applications of our result to the continuity method and the Calabi flow.
\end{abstract}

\section{Introduction}In this short note we discuss  compactness results for K\"ahler metrics in a fixed K\"ahler class, under geometric assumptions. Convergence and compactness of K\"ahler metrics has many special properties compared with the convergence of Riemannian metrics in general. 

We fix a compact K\"ahler manifold $X$, of complex dimension $n$, with a  K\"ahler class $[\omega]$.  Up to a constant, a K\"ahler metric $\tilde \omega\in [\omega]$
can be written in terms of a K\"ahler potential, i.e, $\tilde \o = \o_\phi:= \o + i\partial \bar \partial \phi$. Accordingly, we denote the space of K\"ahler potentials as
\[
\cH:=\{\phi\in C^{\infty}(X): \omega_\phi=\omega+\i \p\bar\p \phi>0\}.\]
The correspondence between metrics and potentials allows to phrase compactness theorems about K\"ahler metrics (in the same class) in terms of their (normalized) potentials.
A well known result of this type shows up in  Yau's proof of the Calabi conjecture \cite{y}. Stated in a technical way, it asserts that the  set of K\"ahler potentials  $\phi\in \cH$  with $C^2$-bounded log-volume ratio $\log\frac{\omega_\phi^n}{\omega^n}$  is  $C^{3, \alpha}$-compact for any $\alpha\in (0, 1)$ (i.e. the corresponding metrics are $C^{1, \alpha}$-compact). Using pluripotential theory, Kolodziej \cite{ko} proved that compactness of K\"ahler potentials holds under much weaker assumptions: if the volume ration is in $L^p, \ p>1$, then the set of potentials is $C^{\alpha}$-compact.

Using Kolodziej's estimates and integral methods, the first and the third author generalized Yau's result on the compactness of K\"ahler metrics and proved that if the log volume ratio is in $W^{1, p}, p>2n$, then the K\"ahler potenials are $C^{2, \alpha}$-compact \cite{ch2}. All these results were achieved by studying the complex Monge-Ampere equation. 

Another problem of interest is to consider the compactness of K\"ahler potentials under curvature conditions, in particular Ricci curvature bound, given the close relation of Ricci curvature and the volume form. Clearly, additional assumptions are needed to guarantee compactness. By using Yau's technique \cite{y}, the first  and third author proved that K\"ahler potentials are $C^{3, \alpha}$-compact given uniform bound on Ricci curvature and the K\"ahler potentials \cite[Theorem 5.1]{ch1}, leading to applications related to the Calabi flow: 
\begin{theorem}[Chen-He]\label{tch} Consider the set of potentials $\phi \in \mathcal H$ with $\sup_X \phi =0$.
If both $Ric \ {\omega_\phi}$ and $\|\phi\|_{C^0}$ are uniformly bounded, then there exists a  $C >1$ such that
\[\frac{1}{C}\omega\leq \omega_\phi\leq C\omega  \ \textup{ and } \ \| \phi\|_{C^3,\alpha} \leq C,
\]
for all $\alpha \in (0,1)$.
\end{theorem}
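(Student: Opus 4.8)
The proof follows Yau's strategy for the complex Monge--Amp\`ere equation. Write $f:=\log\frac{\o_\phi^n}{\o^n}$, so that $\o_\phi^n=e^f\o^n$ and, by the Ricci identity on a K\"ahler manifold, $\mathrm{Ric}\,\o_\phi=\mathrm{Ric}\,\o-\i\p\bp f$; the hypothesis $-\Lambda\o_\phi\le\mathrm{Ric}\,\o_\phi\le\Lambda\o_\phi$ is therefore a two-sided bound on $\i\p\bp f$ relative to $\o_\phi$. The plan is to prove, in order: (i) a $C^0$ bound on $f$, equivalently the volume non-degeneracy $c\,\o^n\le\o_\phi^n\le C\o^n$; (ii) a Laplacian estimate upgrading this to $C^{-1}\o\le\o_\phi\le C\o$; (iii) a Schauder/Evans--Krylov bootstrap to $\|\phi\|_{C^{3,\a}}\le C$. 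Throughout, $B_\o$ denotes a constant bounding the holomorphic bisectional curvature of the fixed metric $\o$ from above and with $\mathrm{Ric}\,\o\ge-B_\o\,\o$, and $C_0:=\|\phi\|_{C^0}$; recall $\sup_X\phi=0$, so $-C_0\le\phi\le0$.

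For step (i) I would first obtain the lower metric bound. Applying a Schwarz-lemma type inequality (Chern--Lu, Aubin--Yau) to the identity map $(X,\o_\phi)\to(X,\o)$ and using $\mathrm{Ric}\,\o_\phi\ge-\Lambda\o_\phi$ gives $\Delta_{\o_\phi}\log\tr_{\o_\phi}\o\ge-\Lambda-B_\o\,\tr_{\o_\phi}\o$. Running the maximum principle on $\log\tr_{\o_\phi}\o-(B_\o+1)\phi$, with $\Delta_{\o_\phi}\phi=n-\tr_{\o_\phi}\o$, and transferring the resulting bound at the maximum point to all of $X$ via $\|\phi\|_{C^0}\le C_0$, yields $\tr_{\o_\phi}\o\le C$, i.e.\ $\o_\phi\ge c\,\o$; in particular $f\ge n\log c$. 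For the reverse bound $\sup_X f\le C$ I would use the upper Ricci bound: from $\i\p\bp f=\mathrm{Ric}\,\o-\mathrm{Ric}\,\o_\phi\ge-B_\o\,\o-\Lambda\o_\phi=-(B_\o+\Lambda)\o-\Lambda\,\i\p\bp\phi$, the function $\psi:=f+\Lambda\phi$ satisfies $\i\p\bp\psi\ge-(B_\o+\Lambda)\o$, i.e.\ it is quasi-plurisubharmonic with a bound depending only on $\o$ and $\Lambda$. The sub-mean-value inequality for such functions controls $\sup_X\psi$ by $\int_X\psi_+\,\o^n$ up to a constant depending only on $\o$; since $\sup_X\phi=0$ gives $\psi_+\le f_+\le e^f$ and $\int_X e^f\o^n=V$, we conclude $\sup_X\psi\le C$ and hence $\sup_X f\le C+\Lambda C_0$. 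Together this gives $\|f\|_{C^0}\le C$.

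With $\|f\|_{C^0}\le C$ in hand, step (ii) is Yau's second-order estimate \cite{y}. The upper Ricci bound gives the lower bound $\Delta_\o f=S_\o-\tr_\o\mathrm{Ric}\,\o_\phi\ge-C-\Lambda\,\tr_\o\o_\phi$, while $\tr_\o\o_\phi\ge n\,e^{f/n}\ge nc$ from step (i). Substituting both into Yau's inequality for $\Delta_{\o_\phi}\log\tr_\o\o_\phi$ --- whose curvature term involves only the fixed $\o$ --- and running the maximum principle on $\log\tr_\o\o_\phi-A\phi$ with $A=A(\o)$ large, one bounds $\tr_{\o_\phi}\o$ at the maximum point; the bound $\sup_X f\le C$ just proved then converts this into a bound on $\tr_\o\o_\phi$ there, and hence everywhere via $\|\phi\|_{C^0}\le C_0$. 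This yields $C^{-1}\o\le\o_\phi\le C\o$.

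Finally, step (iii): the equation $\log\det(g_{i\bar j}+\phi_{i\bar j})=f+\log\det g_{i\bar j}$ is now uniformly elliptic, and $\Delta_\o f=S_\o-\tr_\o\mathrm{Ric}\,\o_\phi$ is bounded in $L^\infty$ (using $\tr_\o\o_\phi\le C$), so $f\in W^{2,p}\subset C^{1,\a}$ for all $p$ and all $\a\in(0,1)$. The complex Evans--Krylov theorem (concavity of $M\mapsto\log\det M$) gives $\|\phi\|_{C^{2,\a_0}}\le C$ for a universal $\a_0$, and differentiating the equation once and applying interior Schauder estimates bootstraps this to $\|\phi\|_{C^{3,\a}}\le C$ for every $\a\in(0,1)$. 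I expect step (i) to be the crux: because $\int_X e^f\o^n=V$ controls $f$ only in an integral sense, neither bound on $f$ follows from the maximum principle applied to $f$ directly, and the real work is to read off $\inf f$ from the Ricci lower bound through the Chern--Lu estimate and $\sup f$ from the Ricci upper bound by recognizing $f+\Lambda\phi$ as a member of a compact family of quasi-plurisubharmonic functions; once $\|f\|_{C^0}$ is available, steps (ii)--(iii) are Yau's argument essentially verbatim.
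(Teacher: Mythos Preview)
The paper does not give its own proof of this theorem: it is quoted as \cite[Theorem~5.1]{ch1} and then used as a black box in Corollary~\ref{cor1} once a $C^0$ bound has been obtained by other means. Your proof is correct and follows the standard Yau-type strategy; two of your three ingredients in fact reappear in the paper's proof of Theorem~\ref{main}. The upper bound $\sup_X f\le C$, obtained by noting that a fixed linear combination of $\phi$ and $f$ is $\omega$-plurisubharmonic and then combining the sub-mean-value inequality with Jensen's inequality for $\int_X f\,\omega^n$, is exactly the paper's argument leading to \eqref{trivest}; and Yau's maximum-principle computation for $e^{-C_3\phi}(n+\Delta_\omega\phi)$ is reproduced around \eqref{yau}. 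Your step~(i), lower-bound half, via the Chern--Lu inequality (exploiting the Ricci \emph{lower} bound) is the natural companion argument; the paper's Theorem~\ref{main} omits it because only an upper Ricci bound is assumed there, which is why that theorem yields only $0\le\omega_\phi\le C'\omega$ and the two-sided metric estimate together with the $C^{3,\alpha}$ bound is deferred to Corollary~\ref{cor1} by invoking \cite{ch1} directly.
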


The assumption on the $C^0$ bound is not satisfactory in various settings. For example, when considering equations that govern existence of canonical K\"ahler metrics, one is often initially led to energy bounds on the potentials instead of $C^0$ bounds.

To put the problem in a more geometric context, we recall the $L^2 $ Mabuchi metric and its $L^p$ generalizations defined on $\cH$: 
\[\|\psi\|_\phi^p=\int_M |\psi|^p \omega_\phi^n,\] 
where $\phi \in \mathcal H$ and $\psi \in T_\phi \mathcal H$ is a ``tangent vector". 
These metrics have been studied extensively, and are closely related to existence of canonical K\"ahler metrics, per Donaldson's program \cite{do}. In \cite{c1} the first author solved Donaldson's conjecture on the $L^2$ geodesic equation and confirmed that the Mabuchi path length pseudo distance $d_2$ is indeed a distance on $\mathcal H$. The $L^p$ analog of this same result was obtained in \cite{da}, i.e., it was shown that the $d_p$ path length pseudo distances are bona fide distances on $\mathcal H$ for all $p \geq 1$.

Along these lines, instead of assuming a $C^0$ bound, one can study the compactness of K\"ahler potentials with bounded Ricci curvature and bounded $L^2$ Mabuchi distance. This problem was proposed around 2006 by the first author. Unfortunately the estimates obtained on the Mabuchi distance at that time were not effective enough. Recently, in \cite{da} the second author has proved effective estimates comparing  the $d_p$ metrics to 	``more friendly" analytic expressions (see \eqref{doubleest} below). In particular 
the following estimate holds for all  $d_p$ metrics (see \cite{DR} for more precise estimates related to the $d_1$ metric):
\begin{equation}\label{eq: I_d_p_est}
\mathcal I(\o_u, \o_v):=\int_X (v-u) (\omega_u^n-\omega_v^n) \leq C d_p(u,v),  \ \ u,v \in \mathcal H,
\end{equation}
for some absolute constant $C:=C(n)>1$.

The main result of this short note is a compactness theorem for K\"ahler potentials with bounded Ricci curvature and bounded $\cI$ functional. In short, Theorem \ref{tch} still holds if we replace the $C^0$ bound by the bound on $\cI$-functional, with some precisions made along the way:
\begin{theorem}[Theorem \ref{main}, Corollary \ref{cor1}] For  $C>0$ consider the set of potentials $\phi \in \mathcal H$ with $\sup_X \phi =0$ and $\mathcal I(\omega, \omega_\phi) \leq C$.
\begin{itemize}
\item[(i)] If $Ric_{\omega_\phi} \leq C \o_\phi$ then there exists $D:=D(X,\omega,J,C)>1$ such that 
$$0 \leq \omega_\phi \leq D \omega.$$
In particular, $\|\phi\|_{C^{1,\alpha}} \leq D'$ for any $\alpha \in (0,1)$.
\item[(ii)] If $-C \o_\phi \leq Ric_{\omega_\phi} \leq C \o_\phi$
then 
$$\frac{1}{D} \omega \leq \omega_\phi \leq D \omega \ \textup{ and } \ \|\phi \|_{C^{3,\alpha}} \leq D.$$
\end{itemize}
\end{theorem}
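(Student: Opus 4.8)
The plan is to reduce both parts of the theorem to a single uniform estimate $\|\phi\|_{C^0}\le D$ on the normalized potentials, and then to upgrade this to metric bounds and higher regularity by the standard second-order elliptic machinery. Throughout I write $F:=\log\!\big(\omega_\phi^n/\omega^n\big)$, so that $Ric_{\omega_\phi}=Ric_\omega-i\partial\bar\partial F$.

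\emph{Step 1: a two-sided volume-ratio bound from the upper Ricci bound.} The hypothesis $Ric_{\omega_\phi}\le C\omega_\phi$, common to (i) and (ii), reads $i\partial\bar\partial F\ge Ric_\omega-C\omega_\phi$; since $Ric_\omega\ge -c_0\omega$ for some $c_0=c_0(X,\omega,J)$ and $\omega_\phi=\omega+i\partial\bar\partial\phi$, it rearranges to $(c_0+C)\,\omega+i\partial\bar\partial(F+C\phi)\ge 0$, so $F+C\phi$ is $(c_0+C)\omega$-plurisubharmonic. By Jensen's inequality $\int_X F\,\omega^n\le 0$, and $\int_X\phi\,\omega^n\le 0$ since $\sup_X\phi=0$; hence the standard estimate bounding the supremum of a quasi-plurisubharmonic function by its mean gives $\sup_X(F+C\phi)\le C_1$, i.e. $\omega_\phi^n\le C_2\,e^{-C\phi}\,\omega^n$. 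The same circle of ideas — that normalized quasi-plurisubharmonic functions form an $L^1$-bounded family, together with the local maximum principle applied to $-(F+C\phi)$, which satisfies $i\partial\bar\partial\big(-(F+C\phi)\big)\le (c_0+C)\omega$ — gives the reverse bound $\omega_\phi^n\ge c_2\,e^{-C\phi}\,\omega^n$ and uniform bounds for $\int_X(-\phi)\,\omega^n$ and $\int_X\log\!\tfrac{\omega^n}{\omega_\phi^n}\,\omega^n$. (So far $\mathcal I$ has not been used.)

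\emph{Step 2: the $C^0$ estimate, where $\mathcal I$ enters — and the main obstacle.} Writing $\mathcal I(\omega,\omega_\phi)=\int_X i\partial\phi\wedge\bar\partial\phi\wedge\sum_{k=0}^{n-1}\omega^k\wedge\omega_\phi^{n-1-k}=\int_X(-\phi)(\omega_\phi^n-\omega^n)$, the hypothesis $\mathcal I(\omega,\omega_\phi)\le C$ yields the moment bound $\int_X(-\phi)\,\omega_\phi^n\le C+\int_X(-\phi)\,\omega^n$, whose right side is controlled by Step 1. Feeding in the lower volume-ratio bound $\omega_\phi^n\ge c_2 e^{-C\phi}\omega^n$ forces $\int_X(-\phi)\,e^{-C\phi}\,\omega^n\le C'$, hence $e^{-C\phi}\in L^1(\omega^n)$ uniformly and, using $\omega_\phi^n\le C_2 e^{-C\phi}\omega^n$, $\int_X\tfrac{\omega_\phi^n}{\omega^n}\log\!\big(1+\tfrac{\omega_\phi^n}{\omega^n}\big)\,\omega^n\le C''$: the measure $\omega_\phi^n$ lies in a fixed Orlicz/capacity class. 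I would then invoke Kolodziej's pluripotential estimate for the complex Monge--Amp\`ere equation, in the refined form valid for such classes, to obtain $\operatorname{osc}_X\phi\le D$, i.e. $\|\phi\|_{C^0}\le D$ with $D=D(X,\omega,J,C)$. This is the step I expect to be hardest: the integrability coming out of the $\mathcal I$-bound is borderline — essentially $L\log L$, not $L^p$ for $p>1$ — so one cannot simply quote Kolodziej's $L^p$ theorem and must run the sharp capacity comparison while keeping all constants dependent only on $(X,\omega,J,C)$.

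\emph{Step 3: metric bounds and regularity.} With $\|\phi\|_{C^0}\le D$, Step 1 also yields a genuine two-sided bound $c\le\omega_\phi^n/\omega^n\le C$. For (i): apply the Aubin--Yau inequality to $u=\tr_\omega\omega_\phi$; since $Ric_{\omega_\phi}\le C\omega_\phi$ gives $\tr_\omega Ric_{\omega_\phi}\le C\,\tr_\omega\omega_\phi$, and using a lower bound for the holomorphic bisectional curvature of $\omega$, one gets $\Delta_{\omega_\phi}\log u\ge -C-C'\tr_{\omega_\phi}\omega$; applying the maximum principle to $\log u-A\phi$ for $A$ large, with the $C^0$ and volume-ratio bounds, yields $u\le D$, hence $0\le\omega_\phi\le D\omega$, and then $\|\phi\|_{C^{1,\alpha}}\le D'$ for all $\alpha\in(0,1)$ by Calder\'on--Zygmund estimates. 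For (ii): the extra hypothesis $Ric_{\omega_\phi}\ge -C\omega_\phi$ allows the Chern--Lu inequality for the identity map $(X,\omega_\phi)\to(X,\omega)$ (lower Ricci bound on the domain, upper bisectional curvature bound on the target), $\Delta_{\omega_\phi}\log\tr_{\omega_\phi}\omega\ge -C-C''\tr_{\omega_\phi}\omega$; the same maximum-principle argument with the $C^0$ bound gives $\tr_{\omega_\phi}\omega\le D$, hence $\omega_\phi\ge D^{-1}\omega$. Finally, with $D^{-1}\omega\le\omega_\phi\le D\omega$ the equation $\omega_\phi^n=e^F\omega^n$ is uniformly elliptic and concave, and the two-sided Ricci bound gives $|Ric_{\omega_\phi}|_\omega\le D$, so $i\partial\bar\partial F\in L^\infty$ and $F\in C^{1,\beta}$ for every $\beta<1$; Evans--Krylov followed by the Schauder bootstrap then gives $\|\phi\|_{C^{3,\alpha}}\le D$ for all $\alpha\in(0,1)$.
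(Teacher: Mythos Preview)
There is a genuine gap in Step~1, and it propagates fatally into Step~2. The ``reverse bound'' $\omega_\phi^n\ge c_2\,e^{-C\phi}\omega^n$ does not follow from an upper Ricci bound. The inequality $i\partial\bar\partial\big(-(F+C\phi)\big)\le(c_0+C)\omega$ says only that $F+C\phi$ is $(c_0+C)\omega$-plurisubharmonic; such a function can tend to $-\infty$, and no maximum-principle argument applied to its negative prevents this (the maximum principle for subsolutions bounds $\sup(F+C\phi)$ by its average, which is the direction you already have). The paper in fact exhibits a counterexample on the flat torus: metrics $\omega_{\beta_\varepsilon}=\alpha_\varepsilon\omega$ with $\alpha_\varepsilon(0)=\varepsilon\to 0$, uniformly bounded $\mathcal I$ and Ricci bounded above, for which $\omega_{\beta_\varepsilon}^n/\omega^n\to 0$ while the potentials stay bounded --- so your pointwise lower volume-ratio bound is simply false. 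Without it, your route to $\int_X(-\phi)e^{-C\phi}\omega^n\le C'$ collapses, and with it your exponential integrability. Even granting that step, the resulting $L\log L$ control of $\omega_\phi^n/\omega^n$ is, as you yourself flag, strictly below the $L(\log L)^p$, $p>n$, threshold in Kolodziej's estimate, and you give no mechanism to close that gap.

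The paper's argument sidesteps both issues by a different, and genuinely nontrivial, mechanism. From the one-sided bound $F+C\phi\le D$ together with the $\mathcal I$-bound (which controls $\int_X(-\phi)\,\omega_\phi^n$) one obtains a uniform \emph{entropy} bound $\int_X F\,\omega_\phi^n\le E$. By the compactness theorem of Berman--Boucksom--Eyssidieux--Guedj--Zeriahi, this forces every weak $L^1$ limit of the normalized family to lie in $\mathcal E^1(X,\omega)$, hence to have zero Lelong numbers. Zeriahi's uniform version of Skoda's integrability theorem then yields $\int_X e^{-\alpha\phi}\omega^n\le C(\alpha)$ for \emph{every} $\alpha>0$, not just the single exponent $\alpha=C$ coming from the Ricci bound. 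Combined with $\omega_\phi^n\le C_2\,e^{-C\phi}\omega^n$, this puts $\omega_\phi^n/\omega^n$ in $L^p$ for every $p$, and Kolodziej applies directly. Your Step~3 is essentially correct and matches the paper, which runs Yau's computation for (i) and invokes the Chen--He compactness theorem for (ii).
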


By the discussion preceding the theorem, the same result holds if we replace the bound on $\mathcal I(\o,\o_\phi)$ with a corresponding bound on $d_p(0,\phi)$, for any $p \geq 1$.

As in \cite{ch1}, this last compactness result has direct applications to the smooth Calabi flow. Recall that by the results of this latter paper, uniform Ricci curvature bound along the Calabi flow implies existence of the flow for all times $t \geq 0$. Our theorem in this direction is the following:
\begin{theorem}\label{thm: Calabif} Suppose $t \to c_t , \ t \in [0,\infty)$ is a smooth Calabi flow trajectory with uniformly bounded Ricci curvature. Then the following hold:
\begin{itemize}
\item[(i)] If there exists a cscK metric in $\mathcal H$ then $t \to c_t$ converges smoothly to one such metric as $t \to \infty$.
\item[(ii)] If the Mabuchi K-energy is proper then there exists a cscK metric in $\mathcal H$ and $t \to c_t$ converges smoothly to one such metric.
\end{itemize}
\end{theorem}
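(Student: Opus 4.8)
The plan is to feed the dynamics of the Calabi flow into the compactness theorem above. Recall two standard features of a smooth Calabi flow trajectory $t\mapsto c_t$: first, the Mabuchi K-energy $\mathcal M$ is non-increasing, with $\frac{d}{dt}\mathcal M(c_t)=-\mathrm{Ca}(c_t)$, where $\mathrm{Ca}(c_t)=\int_X(S(\omega_{c_t})-\underline S)^2\,\omega_{c_t}^n$ is the Calabi energy, itself non-increasing along the flow; second, by Calabi--Chen the flow is non-expansive for $d_2$, so if $\psi$ is another Calabi flow on the same interval then $t\mapsto d_2(c_t,\psi_t)$ is non-increasing (see also \cite{ch1}). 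Since $\mathcal M$, $\mathrm{Ca}$ and the metrics $\omega_{c_t}$ are unchanged when a time-dependent constant is added to $c_t$, I will freely renormalize $c_t$ so that $\sup_X c_t=0$ when invoking Corollary \ref{cor1}, phrasing conclusions in terms of the metrics $\omega_{c_t}$; the case of nontrivial $\mathrm{Aut}_0(X)$ is commented on at the end.

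\textbf{Step 1: a uniform $\mathcal I$ bound along the flow.} In case (ii), properness of $\mathcal M$ gives, in the precise $d_1$-coercive form of \cite{DR}, an inequality $\mathcal M(\phi)\ge \varepsilon\, d_1(0,\phi)-C_\varepsilon$ for normalized $\phi$; together with $\mathcal M(c_t)\le\mathcal M(c_0)$ this forces $d_1(0,c_t)\le C$, hence $\mathcal I(\omega,\omega_{c_t})\le C'$ by \eqref{eq: I_d_p_est}. In case (i), fix a cscK potential $\phi_\ast\in\mathcal H$; it is stationary for the flow, so the Calabi--Chen contraction gives $d_2(c_t,\phi_\ast)\le d_2(c_0,\phi_\ast)$, hence $d_2(0,c_t)$ is bounded and again $\mathcal I(\omega,\omega_{c_t})\le C'$ by \eqref{eq: I_d_p_est} (equivalently, one runs Corollary \ref{cor1} with $\omega_{\phi_\ast}$ as reference metric, using $\mathcal I(\omega_{\phi_\ast},\omega_{c_t})\le C\,d_2(\phi_\ast,c_t)$ directly).

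\textbf{Step 2: smooth precompactness and a cscK limit.} With the two-sided Ricci bound from the hypothesis and the $\mathcal I$ bound from Step 1, Corollary \ref{cor1}(ii) yields $D^{-1}\omega\le\omega_{c_t}\le D\omega$ and $\|c_t\|_{C^{3,\alpha}}\le D$ for all $t$ (after normalization). Feeding these into the parabolic regularity theory for the Calabi flow developed in \cite{ch1} bootstraps to uniform $C^k$ bounds for every $k$, so $\{c_t\}_{t\ge0}$ is precompact in $C^\infty$; in particular $\mathcal M(c_t)$ is bounded, hence (being non-increasing) convergent, so $\int_0^\infty\mathrm{Ca}(c_t)\,dt<\infty$. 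As $\mathrm{Ca}(c_t)$ is non-increasing, this forces $\mathrm{Ca}(c_t)\to0$, so any $C^\infty$ subsequential limit $\omega_{c_\infty}$ satisfies $\mathrm{Ca}(c_\infty)=0$, i.e.\ is a cscK metric in $\mathcal H$; in case (ii) this already proves the existence claim. \textbf{Step 3: convergence.} The limit $c_\infty$ is stationary, so $t\mapsto d_2(c_t,c_\infty)$ is non-increasing and tends to $0$ along the extracting subsequence, hence $d_2(c_t,c_\infty)\to0$ for all $t\to\infty$. Since $d_2$ is continuous for $C^2$-convergence and a genuine distance \cite{c1}, every $C^\infty$ subsequential limit of $\{c_t\}$ equals $c_\infty$ up to a constant, so $\omega_{c_t}\to\omega_{c_\infty}$ smoothly. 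This gives (i) and (ii).

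\textbf{Main obstacle.} Corollary \ref{cor1} does the analytic heavy lifting, so the real content of the argument is Step 1: converting the variational hypotheses (existence of a cscK metric, or properness) into a uniform bound on $\mathcal I(\omega,\omega_{c_t})$ along the flow, for which the comparison \eqref{eq: I_d_p_est} and the Calabi--Chen non-expansiveness are exactly the right inputs. Two technical points require care. First, the normalization of the Calabi flow must be set up so that Corollary \ref{cor1} applies and the potentials, not merely the metrics, are controlled; this is routine. Second, when $\mathrm{Aut}_0(X)$ is nontrivial, properness and uniqueness of cscK hold only modulo $\mathrm{Aut}_0(X)$, and one must run Steps 1--3 for the flow composed with a suitable path of automorphisms, tracking that $d_2$ and $\mathcal I$ behave well under this twist; this is again a standard device. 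The bootstrap in Step 2, from $C^{3,\alpha}$ plus bounded Ricci to $C^\infty$ along a fourth-order parabolic equation, is also technical but available from \cite{ch1}.
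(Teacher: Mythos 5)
Your proposal is correct and its core step coincides with the paper's: in case (i) you use the Calabi--Chen $d_2$-contraction against a fixed cscK potential, and in case (ii) the $d_1$-coercive form of properness together with monotonicity of the K-energy, to bound $d_p(0,c_t)$ and hence $\mathcal I(\o,\o_{c_t})$ via \eqref{eq: I_d_p_est}; combined with the two-sided Ricci bound this feeds into Corollary \ref{cor1} exactly as in the paper (the paper controls $|\sup_X c_t|$ by \cite[Corollary 4]{da} rather than by renormalizing, but this is cosmetic). Where you diverge is in identifying the limit: the paper stops at $C^{3,\alpha}$-compactness, observes that the limit is a $C^{3,\alpha}$ minimizer of the K-energy, and invokes the regularity theorem for weak minimizers (\cite[Theorem 1.7]{HZ}, \cite[Theorem 1.1]{bdl}) to conclude it is a smooth cscK potential; you instead bootstrap to uniform $C^k$ bounds via the parabolic estimates of \cite{ch1} and then use $\int_0^\infty \mathrm{Ca}(c_t)\,dt < \infty$ together with monotonicity of the Calabi energy to force $\mathrm{Ca}(c_t) \to 0$, so that any smooth subsequential limit is cscK. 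Both routes are legitimate: yours is more self-contained on the variational side but leans on the higher-order parabolic regularity of \cite{ch1} (which is indeed available there, and is in any case needed by either argument to upgrade to \emph{smooth} convergence), while the paper's route gets by with the weaker $C^{3,\alpha}$ compactness at the cost of importing the elliptic regularity theory of weak K-energy minimizers. Your Step 3 ($d_2$-contraction to the subsequential limit forcing full convergence) is the natural completion and also renders the worry about $\mathrm{Aut}_0(X)$ moot, since the contraction singles out one specific cscK limit.
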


Properness of the K-energy is understood as introduced by Tian (see \cite[Chapter 7]{t}). Part (i) in the above theorem strengthens a theorem of the third author \cite{h}, proved in the case when $X$ does not admit non-trivial holomorphic vector-fields. 
We mention that according to  recent work of Li-Wang-Zheng \cite{LWZ} if an extremal metric exists, and the Calabi flow exists for all time with bounded scalar curvature (such a curvature bound is not needed on K\"ahler surface), then the Calabi flow converges to an extremal metric, after taking a subsequence. 
We also note that $L^1$ convergence of the Calabi flow to a cscK metric holds unconditionally, as proved in \cite{bdl1}, but convergence in H\"older norms is not yet known. Part (ii) strengthens \cite[Theorem 2]{sz}, where a bound on the full curvature tensor is assumed instead.

Lastly, we consider the continuity path for the constant scalar curvature (cscK) equation,  via twisted cscK metrics, as introduced by the first author \cite{c2}, generalizing previous approach to K\"ahler-Einstein metrics. Twisted csck metrics appear in the work of J. Fine \cite{Fine}, Song-Tian \cite{ST}, Stoppa \cite{Stoppa1} and Darvas-Berman-Lu \cite{bdl} to highlight a few works in a very fast expanding literature. The main advantage of Chen's continuity path is that a twisted cscK metric is a minimizer of twisted K-energy, which is always  strictly convex. In particular this implies that the kernel of the linearized operator (of fourth order) is always zero, hence openness holds. For more details and discussion, see Chen \cite{c2}. More precisely, for $t \in [0,1]$ we consider the following family of equations: 
\begin{equation}\label{CSCK}
t(\underline R-R_\psi)+(1-t)(\tr_{\omega_\psi}\omega-n)=0, \ \ \psi \in \mathcal H.
\end{equation}
For $t=1$ this equation reduces to the cscK equation. By the results of \cite{Zeng} and \cite{Hashimoto}, this equation is always solvable in a neigborhood of $t=0$. As alluded to above, the values $t \in [0,1]$ for which the above equation is solvable forms an open set \cite{c2}. If the equation has a smooth solution up to $t=1$, then a desired cscK metric exists. Hence we can assume that there exists a maximal interval of the type $[0, T)$, for which solutions exist, and want to study what happens at the maximal singular time $T$. In this direction we note the following result:

\begin{theorem}\label{estimate}Suppose that the K-energy is proper on $\mathcal H$. If we assume that the Ricci curvature is bounded above along the continuity path then $T=1$ and a smooth cscK metric exists in $\mathcal H$.
\end{theorem}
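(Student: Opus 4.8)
The plan is to feed the compactness theorem of this paper (Theorem~\ref{main}, Corollary~\ref{cor1}) the right inputs. Two things must be arranged along the continuity path: first, the properness hypothesis has to be turned into a uniform bound on $\mathcal I(\omega,\omega_{\psi_t})$; second, the one-sided curvature hypothesis has to be upgraded to a two-sided Ricci bound by means of \eqref{CSCK}. Granted both, part (ii) of the compactness theorem supplies uniform ellipticity and $C^{3,\alpha}$ bounds on the normalized potentials for $t$ bounded away from $0$; a routine elliptic bootstrap then gives uniform $C^\infty$ bounds, and openness of the set of solvable parameters (\cite{c2}) forces $T=1$ while producing a cscK metric as a smooth limit.

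For $t\in(0,1)$ the solution $\psi_t$ of \eqref{CSCK} is a twisted cscK metric with semipositive twist $\tfrac{1-t}{t}\,\omega$, hence, as recalled in the introduction and following \cite{c2}, a global minimizer of the twisted Mabuchi functional $\mathcal M_t:=\mathcal M+\tfrac{1-t}{t}\,\mathcal T_\omega$, where $\mathcal T_\omega$ is the $\omega$-twist energy. Normalizing $\mathcal M(0)=0$, one checks that $\mathcal T_\omega$ is the Aubin functional $I-J$, which is nonnegative and vanishes at the origin; thus $\mathcal M\le\mathcal M_t$ on $\mathcal H$ and $\mathcal M_t(0)=0$, so that $\mathcal M(\psi_t)\le\mathcal M_t(\psi_t)\le\mathcal M_t(0)=0$ for every $t\in(0,T)$. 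Since $\mathcal M$ is proper, this bounds $J(\psi_t)$ --- equivalently $\mathcal I(\omega,\omega_{\psi_t})=I(\psi_t)$ --- by a constant independent of $t$. Setting $\phi_t:=\psi_t-\sup_X\psi_t$ and using that $\mathcal I$ is unchanged by additive constants, we conclude $\sup_X\phi_t=0$ and $\mathcal I(\omega,\omega_{\phi_t})\le C$ for all $t\in[0,T)$.

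Now fix $t_0\in(0,T)$. For $t\in[t_0,T)$, rewrite \eqref{CSCK} as $R_{\phi_t}=\underline R+\tfrac{1-t}{t}\big(\tr_{\omega_{\phi_t}}\omega-n\big)$; since $\tr_{\omega_{\phi_t}}\omega>0$ and $\tfrac{1-t}{t}\le\tfrac{1-t_0}{t_0}$, the scalar curvature obeys a uniform lower bound $R_{\phi_t}\ge -K_0$. Combined with the hypothesis $Ric_{\omega_{\phi_t}}\le C\,\omega_{\phi_t}$, this forces a two-sided bound: each $\omega_{\phi_t}$-eigenvalue of $Ric_{\omega_{\phi_t}}$ is $\le C$, and since they sum to $R_{\phi_t}\ge -K_0$, each is $\ge -K_0-(n-1)C$; hence $-C'\omega_{\phi_t}\le Ric_{\omega_{\phi_t}}\le C'\omega_{\phi_t}$ uniformly on $[t_0,T)$. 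Part (ii) of Theorem~\ref{main} then yields $D>1$, independent of $t\in[t_0,T)$, with $\tfrac1D\omega\le\omega_{\phi_t}\le D\omega$ and $\|\phi_t\|_{C^{3,\alpha}}\le D$. Viewing \eqref{CSCK} as a Laplace equation for $\log(\omega_{\phi_t}^n/\omega^n)$ coupled to a Monge--Amp\`ere equation, the uniform ellipticity lets a standard bootstrap promote this to uniform $C^k$ bounds for all $k$, so $\{\phi_t:t\in[t_0,T)\}$ is precompact in $C^\infty(X)$. Consequently, if $T<1$, a $C^\infty$ limit $\phi_T\in\mathcal H$ of a sequence $\phi_{t_j}$ with $t_j\nearrow T$ solves \eqref{CSCK} at $t=T$ (the coefficient $\tfrac{1-t}{t}$ stays finite), and openness of the solvable set extends solvability beyond $T$, contradicting maximality. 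Hence $T=1$, and the same uniform estimates on $[t_0,1)$ produce a $C^\infty$ limit $\phi_\infty\in\mathcal H$ with $R_{\phi_\infty}=\underline R$, i.e.\ a cscK metric in $\mathcal H$.

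The crux is the energy estimate of the second paragraph. Properness controls $\mathcal M$ only at minimizers, whereas here it must be exploited at every point of the continuity path and, crucially, uniformly as $t\to 0$, where the twist $\tfrac{1-t}{t}\omega$ degenerates; this succeeds precisely because of the two structural features of Chen's path recalled above --- twisted cscK metrics minimize the twisted K-energy, and a semipositive twist only raises the K-energy --- so that $\mathcal M\le\mathcal M_t$ holds with no $t$-dependent loss. Everything downstream is a fairly mechanical combination of \eqref{CSCK}, the compactness theorem of this paper, and the known openness of the path.
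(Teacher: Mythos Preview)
Your argument is correct, and it proceeds by a genuinely different route than the paper's. The paper combines the one--sided compactness Theorem~\ref{main}(i) (upper Ricci bound $+$ $\mathcal I$--bound $\Rightarrow$ uniform $\Delta_\omega\psi_t$) with the black box \cite[Theorem 1.7]{HZ}, which takes a Laplacian bound along the continuity path as input and returns $T=1$ together with the cscK metric. You instead extract from \eqref{CSCK} the scalar curvature lower bound $R_{\phi_t}\ge \underline R - n\tfrac{1-t_0}{t_0}$ on $[t_0,T)$, promote the hypothesis to a two--sided Ricci bound by elementary linear algebra, and then apply the two--sided compactness (Corollary~\ref{cor1}); the resulting uniform ellipticity plus $C^{3,\alpha}$ control lets you bootstrap to $C^\infty$ and close the path by the openness theorem of \cite{c2}. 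Your approach is longer but entirely self--contained within the paper plus \cite{c2}, avoiding the appeal to \cite{HZ}; the paper's approach is quicker but outsources the passage from the Laplacian bound to higher regularity and closedness. The energy step (bounding $\mathcal J$, hence $\mathcal I$, via minimization of the twisted K--energy and properness) is essentially the same in both. One minor labeling point: what you call ``Part (ii) of Theorem~\ref{main}'' is in the body of the paper Corollary~\ref{cor1}; the two--part statement you have in mind is the version given in the introduction.
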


As we will see, this theorem will be a direct consequence of Theorem \ref{main}(i) above and  \cite[Theorem 1.7]{HZ}. Ideally, we expect the assumption on the Ricci curvature bound to be a technicality. This result suggests that if we assume  properness of the K-energy, then any upper bound for the Ricci curvature has to blow up as we are near singular time. 

Finally, let us summarize by comparing our findings with known results in K\"ahler-Einstein and complex Monge-Amp\`ere theory. Our results giving $C^2$ bounds rely crucially on the Ricci upper bound. As a comparison, for the complex Monge-Amp\'ere equation, the K\"ahler-Einstein equation, or the K\"ahler-Ricci flow, one obtains the second order estimates using the $C^0$ estimate (the $C^0$ estimates in turn can be obtained through the equation directly, or  using properness). Such a second order estimate is not known for fourth order equations like the cscK equation, or the Calabi flow. This is a crucial technical difficulty, and represents one of the major technical differences between second order equations and fourth order equations in K\"ahler geometry. Our results suggest that the second order estimates and the Ricci upper bound are essentially equivalent for the cscK equation, assuming properness. 
However it seems to be an extremely hard problem in general to obtain a Ricci upper bound, or even a Laplacian bound, along the Calabi flow or along the continuity path.

\section{The compactness theorem}

In this section we will prove Theorem 1.2. For this we need to use several delicate theorems from pluripotential theory that we now recall:

\begin{theorem} \label{Skoda} Suppose $\mathcal L \subset \textup{PSH}(X,\o)$ is $L^1$ weak compact  and the Lelong numbers of all elements in $\mathcal L$ are zero. Then for any $\alpha > 0$ there exists $C(\alpha,\o)>0$ such that
$$\int_X e^{-\alpha u} \o^n \leq C, \ u \in \mathcal L.$$
\end{theorem}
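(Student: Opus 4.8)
The plan is to combine Skoda's classical integrability theorem—which gives a local, non-uniform bound on $\int e^{-\alpha u}$ in terms of the Lelong numbers of $u$—with a compactness argument to make the constant uniform over the family $\mathcal L$. First I would recall Skoda's theorem in its quantitative form: if $u \in \textup{PSH}(X,\o)$ and all Lelong numbers satisfy $\nu(u,x) < 2/\alpha$ for every $x \in X$ (in particular if all Lelong numbers vanish), then $e^{-\alpha u} \in L^1_{loc}$, and moreover on a fixed coordinate ball the integral is bounded in terms of $\alpha$ and an upper bound for $u$. The issue is that the resulting constant a priori depends on $u$ through its singularities, so the naive approach does not immediately yield a uniform $C(\alpha,\o)$.

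The key step is an argument by contradiction using the assumed $L^1$ weak compactness. Suppose no uniform constant exists: then there is a sequence $u_j \in \mathcal L$ with $\int_X e^{-\alpha u_j}\o^n \to \infty$. By $L^1$ weak compactness of $\mathcal L$, after passing to a subsequence $u_j \to u$ in $L^1(X)$ with $u \in \textup{PSH}(X,\o)$, and one may also assume $u_j \to u$ a.e.\ and that the $u_j$ are uniformly bounded above (Hartogs-type control: $\sup_X u_j$ is bounded since $\mathcal L$ is $L^1$-bounded and $\o$-psh functions satisfy a mean-value/sup comparison). The crucial input is then the uniform version of Skoda's theorem: since $u \in \mathcal L$ and $\mathcal L$ has identically zero Lelong numbers, by the strong openness / uniform Skoda estimate (following Demailly--Kollár, or Zeriahi's version of Skoda's theorem for compact families, or Guedj--Zeriahi's text) there exist $\alpha' > \alpha$ and a neighborhood of $u$ in the $L^1$ topology on which $\int_X e^{-\alpha' v}\o^n$ is uniformly bounded; applying this to $v = u_j$ for $j$ large and using $\alpha < \alpha'$ (together with Hölder or just monotonicity after normalizing) contradicts $\int_X e^{-\alpha u_j}\o^n \to \infty$.

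The main obstacle I anticipate is establishing the \emph{uniform} Skoda estimate on a weakly compact family, i.e.\ upgrading the pointwise-in-$u$ statement to one that is locally uniform in the $L^1$ topology. This is exactly where the vanishing of Lelong numbers of \emph{all} elements of $\mathcal L$ is used, together with the fact that the Lelong number $\nu(\cdot,x)$ is upper semicontinuous with respect to $L^1$ convergence (by Demailly's results), so the singularities cannot concentrate in the limit. One clean way to package this: for each $u \in \mathcal L$ and each $x$, $\nu(u,x)=0<2/\alpha$; by upper semicontinuity of $(v,x)\mapsto \nu(v,x)$ and compactness of $X$, for any limit point $u$ of $\mathcal L$ one still has $\nu(u,x)=0$, and Demailly--Kollár's effective version of Skoda's theorem gives an open neighborhood (in $L^1$) and an exponent $\alpha'>\alpha$ with a uniform integrability bound. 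I would then cite the relevant statement from pluripotential theory (e.g.\ Guedj--Zeriahi, or Zeriahi's ``Volume and capacity of sublevel sets'') rather than reprove it, and note that the hypotheses of Theorem \ref{Skoda} are precisely tailored to invoke it.
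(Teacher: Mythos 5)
Your proposal is essentially correct, but it takes a different route from the paper. The paper's proof is a direct localization: it covers $X$ by coordinate patches $V_j \Subset U_j$ with local potentials $i\partial\bar\partial\beta_j=\o$, observes that the local families $\{\beta_j+v : v\in\mathcal L\}\subset \textup{PSH}(U_j)$ are again weakly $L^1$ compact with zero Lelong numbers, and then quotes Zeriahi's uniform integrability theorem for compact families of psh functions \cite[Corollary 3.2]{ze} on each patch before summing. You instead argue by contradiction, extracting from a hypothetical unbounded sequence $u_j$ an $L^1$-limit $u$, and invoking the Demailly--Koll\'ar semicontinuity/openness theorem at $u$ to get a uniform bound on $\int_X e^{-\alpha v}\o^n$ in an $L^1$-neighborhood of $u$. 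This is a legitimate alternative (indeed it is close in spirit to how Zeriahi's uniform statement is itself proved); the paper's version is shorter because it outsources the entire compactness argument to the cited corollary, while yours makes the mechanism explicit at the cost of needing the Demailly--Koll\'ar effective openness as a black box.

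One point in your third paragraph deserves correction: upper semicontinuity of Lelong numbers under $L^1$ convergence goes the \emph{wrong} way for your purposes. It gives $\nu(u,x)\geq \limsup_j \nu(u_j,x_j)$, so Lelong numbers can jump \emph{up} in the limit (e.g.\ $\max(\log|z|,-j)\to\log|z|$), and the vanishing of $\nu(u_j,\cdot)$ alone does not force $\nu(u,\cdot)=0$. What actually saves the argument is exactly what you wrote in the second paragraph: $\mathcal L$ is assumed compact, hence closed in the $L^1$ topology, so the limit $u$ lies in $\mathcal L$ and has zero Lelong numbers by hypothesis. With that justification in place of the semicontinuity appeal, your argument is sound.
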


This theorem is a well known consequence of Skoda's uniform integrability theorem, as proved in \cite[Corollary 3.2]{ze} and will be one of the key ingredients in our argument. As courtesy to the reader, we give a proof.

\begin{proof} Let $ U_1, \ldots, U_k, V_1, \ldots, V_k \subset X$ be coordinate neighborhoods such that $\overline V_k \subset U_k$, $ V_1, \ldots, V_k$ covers $X$ and  $i\partial\bar \partial \beta_j = \o$ for some $\beta_j \in C^\infty(U_j), j =1,\ldots, k$. We introduce the following local families:
$$\mathcal L_j = \{ \beta_j + v \ | \ v \in \mathcal L \} \subset \textup{PSH}(U_j).$$
As $\mathcal L \subset \textup{PSH}(X,\o)$ is weak $L^1$ compact, for any $\alpha >0$ the families $\alpha \mathcal L_j \subset \textup{PSH}(U_j)$ are also  weak $L^1$ compact. For analogous reasons, the elements of  $\alpha \mathcal L_j$ have zero Lelong numbers, hence we can apply  \cite[Corollary 3.2]{ze} which yields:
$$\int_{V_j} e^{-\alpha u} \o^n \leq C_j(\alpha,\mathcal L_j), \ u \in \mathcal L_j, j =1,\ldots, k.$$
Summing up over the coordinate patches yields the desired estimate.
\end{proof}

\begin{theorem}[Kolodziej's $C^0$ estimate]\label{Kolotheorem} Suppose $f \in L^1(X)$ is such that $f \geq 0$, $\int_X f \o^n =1$ and

$$\int_X  f \log(1+f)^p \o^n < C$$
for some $p > n$ and $C <\infty$. Then there exists $u \in PSH(X,\o)\cap L^\infty$ with $\sup_X u =0$ and $\sup_X |u| \leq D(C,p,\o)$ such that
$$(\o + i\partial\bar \partial u)^n = f\o^n.$$
\end{theorem}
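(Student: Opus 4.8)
The assertion has two parts: solvability of the complex Monge--Amp\`ere equation $(\o+\i\p\bp u)^n=f\o^n$, and the a priori bound $\sup_X|u|\le D(C,p,\o)$ on the normalized solution. Writing $\o_u:=\o+\i\p\bp u$, the plan is to prove the bound first on smooth strictly positive data and then recover the general case by approximation. If $f$ is smooth and $f>0$, Yau's theorem \cite{y} gives a smooth $u$ with $\o_u^n=f\o^n$ and $\sup_X u=0$. For $f$ only as in the hypothesis, I would choose smooth $f_j>0$ with $\int_X f_j\o^n=1$, $f_j\to f$ in $L^1$, and $\int_X f_j\log(1+f_j)^p\,\o^n\le C'$ for a constant $C'=C'(C)$; this is a routine mollification-and-truncation step using convexity of $t\mapsto t\log(1+t)^p$. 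Solving $\o_{u_j}^n=f_j\o^n$ with $\sup_X u_j=0$, the uniform bound $\sup_X|u_j|\le D$ proved below lets one extract a subsequence converging in $L^1$, hence in capacity and a.e., to some $u\in\textup{PSH}(X,\o)\cap L^\infty$ with $\sup_X u=0$ and $\sup_X|u|\le D$; continuity of the Monge--Amp\`ere operator along uniformly bounded sequences converging in capacity then yields $\o_u^n=f\o^n$.

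The core is the uniform $L^\infty$ estimate, which I would obtain by Kolodziej's capacity method. Let $\mu=f\o^n$ and let $\textup{Cap}(E)=\sup\{\int_E\o_v^n:\ v\in\textup{PSH}(X,\o),\ 0\le v\le1\}$ be the Monge--Amp\`ere capacity. Two ingredients are needed. First, a volume--capacity comparison: there are $A_0,c_0>0$ depending only on $(X,\o)$ with $\textup{Vol}(E)\le A_0\exp(-c_0\,\textup{Cap}(E)^{-1/n})$ for every Borel set $E$, a standard estimate obtained from the sub-mean-value inequality for $\o$-psh functions. Second, the $L\log^p L$ hypothesis turns small volume into small mass: splitting $\int_E f\,\o^n$ at a level $\lambda$ gives $\mu(E)\le\lambda\,\textup{Vol}(E)+C\log(1+\lambda)^{-p}$, and optimizing in $\lambda$ gives $\mu(E)\le\kappa\,(\log(1/\textup{Vol}(E)))^{-p}$ when $\textup{Vol}(E)$ is small. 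Combining the two, $\mu(E)\le F(\textup{Cap}(E))$ with $F(x)\le A_1 x^{p/n}$ for $x$ small, where $A_1,\kappa$ depend only on $C,p,(X,\o)$.

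Now apply this with $u=u_j$, $\sup_X u=0$, and set $E_s=\{u<-s\}$. The fundamental comparison inequality for the Monge--Amp\`ere operator (Kolodziej) states that for $s\ge0$ and $0<t<1$,
\[
t^n\,\textup{Cap}(E_{s+t})\le\int_{E_s}\o_u^n=\mu(E_s)\le F\big(\textup{Cap}(E_s)\big)\le A_1\,\textup{Cap}(E_s)^{p/n}.
\]
Writing $g(s)=\textup{Cap}(E_s)$, a non-increasing right-continuous function with $g(s)\to0$, this reads $g(s+t)\le A_1 t^{-n} g(s)^{p/n}$. Because $p>n$, the exponent $p/n$ is strictly larger than $1$, and the standard De Giorgi--type iteration lemma used by Kolodziej shows that once $g(s_0)$ drops below an explicit threshold, $g$ vanishes after a bounded further increment $s_1$. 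An initial estimate $g(s_0)\le$ (threshold) at a controlled $s_0$ follows from a uniform $L^1$ bound on the $u_j$, obtained either from the Skoda-type integrability of Theorem \ref{Skoda} applied to the weakly compact family $\{u_j\}$, or by a direct capacity estimate. Then $\textup{Cap}(E_{s_0+s_1})=0$, so $\textup{Vol}(E_{s_0+s_1})=0$, whence $u\ge-(s_0+s_1)$ a.e., hence everywhere, with $D:=s_0+s_1$ depending only on $C,p,(X,\o)$.

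The main obstacle is the quantitative bookkeeping rather than any single conceptual step: one must verify that $p>n$ is precisely what makes $F$ superlinearly small so that the iteration terminates in finite ``time'', and that \emph{every} constant --- in the volume--capacity comparison, in the $L^1$ bound on the approximants, and in the iteration lemma --- is controlled solely by $C$, $p$ and $(X,\o)$, and in particular uniformly along the approximating sequence $u_j$. It is exactly this uniformity that upgrades the mere finiteness of $\|u\|_\infty$ to the effective bound $D(C,p,\o)$ asserted in the statement.
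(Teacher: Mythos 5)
The paper offers no proof of this statement: it is quoted as Kolodziej's theorem with the reference [Ko], so there is no in-paper argument to compare yours against. What you have written is a faithful reconstruction of the standard capacity proof from that reference (in the streamlined Eyssidieux--Guedj--Zeriahi form), and the a priori estimate --- the part the paper actually uses --- is correctly assembled: the splitting $\mu(E)\le\lambda\,\textup{Vol}(E)+C\log(1+\lambda)^{-p}$, the volume--capacity comparison, the resulting bound $\mu(E)\le A_1\,\textup{Cap}(E)^{p/n}$ with exponent $p/n>1$ precisely because $p>n$, the comparison inequality $t^n\textup{Cap}(E_{s+t})\le\mu(E_s)$, the De Giorgi iteration, and the uniform starting bound on $\textup{Cap}(E_{s_0})$ coming from the $L^1$-compactness of $\{v\in\textup{PSH}(X,\o):\sup_Xv=0\}$. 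All constants there do depend only on $C$, $p$ and $(X,\o)$, as required.

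The one genuine soft spot is the limit step in the existence part. A uniformly bounded sequence of $\o$-psh functions converging in $L^1$ need \emph{not} converge in capacity, and the Monge--Amp\`ere operator is not continuous under $L^1$ convergence alone (Cegrell-type examples), so the phrase ``hence in capacity'' is doing unjustified work. The standard repairs are either (a) Kolodziej's stability estimate, which under the $L\log^pL$ (or $L^q$) hypothesis gives $\|u_j-u_k\|_\infty\le c\,\|f_j-f_k\|_{L^1}^{\gamma}$ and hence uniform convergence of the approximants, or (b) decoupling the two assertions: prove the a priori bound exactly as you do for all measures satisfying $\mu(\cdot)\le F(\textup{Cap}(\cdot))$, and obtain existence separately (variational method, Schauder fixed point, or Cegrell's construction). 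Since the paper invokes the theorem purely as a black box for the $C^0$ estimate, this gap does not affect anything downstream, but it should be closed if you intend the sketch as a proof.
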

The $d_p$ metric on $\mathcal H$ is the metric induced by the $L^p$ Mabuchi geometry on $\mathcal H$, as in studied in \cite{da}. For a survey on these metrics and related matters, we refer to \cite{da2}. The only things of importance here are that the every $d_p$ metric dominates the $d_1$ metric which in turn dominates the weak $L^1$ topology of $\textup{PSH}(X,\o)$. There is also the following double estimate:
\begin{equation}\label{doubleest}
\frac{1}{C}d_p(u,v) \leq \Big(\int_X |u-v|^p\o_u^n\Big)^{1/p} + \Big(\int_X |u-v|^p\o_v^n\Big)^{1/p} \leq C d_p(u,v).
\end{equation}
This last estimate implies that Aubin's $\mathcal I$ functional, recalled below, is dominated by all metrics $d_p$.
\begin{equation}\label{AubinIest}
\mathcal I(\o_u, \o_v)=\int_X (v-u) (\omega_u^n-\omega_v^n) \leq C d_p(u,v), \ u,v \in \mathcal H.
\end{equation}
Lastly, we record the following compactness theorem, which is a consequence of Theorem \ref{Skoda} and \eqref{doubleest}:
\begin{theorem}[strong compactness]\textup{\cite[Proposition 2.6, Theorem 2.17]{bbegz}} \label{bbegzcomp} Suppose $\{ u_k\}_{k \in \Bbb N} \subset \mathcal H$ is such that $|\sup_X u_k| \leq D$ and $\int_X \log \frac{\o_{u_k}^n}{\o^n} \o^n_{u_k} \leq D$ for some $D \geq0$. Then there exists $u \in \mathcal E^1(X,\o)$ and $k_l \to \infty$ such that $\int_X |u_{k_l}- u| \o^n \to 0$.
\end{theorem}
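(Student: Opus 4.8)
The plan is to follow the argument of \cite{bbegz}, which has two ingredients: the classical weak compactness of $\o$-plurisubharmonic functions with normalized supremum (responsible for the subsequence and the $\o$-psh limit), and the entropy bound (which upgrades the limit to finite energy). First I would extract the subsequence: by a classical compactness theorem for $\o$-psh functions (H\"ormander), the set $\{v\in\textup{PSH}(X,\o): |\sup_X v|\le D\}$ is relatively compact in $L^1(X,\o^n)$, and an $L^1$-limit of $\o$-psh functions is again $\o$-psh — it is not $\equiv-\infty$ because the sub-mean value inequality together with $\sup_X u_k\ge-D$ bounds $\|u_k\|_{L^1(\o^n)}$ uniformly. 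Thus there are $k_l\to\infty$ and $u\in\textup{PSH}(X,\o)\cap L^1$ with $u_{k_l}\to u$ in $L^1(X,\o^n)$ and $\sup_X u\le D$; it remains only to show $u\in\mathcal E^1(X,\o)$.

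The heart of the proof is to turn the entropy bound into a uniform bound on the Monge--Amp\`ere energy $E(v):=\frac1{(n+1)V}\sum_{j=0}^n\int_X v\,\o_v^j\wedge\o^{n-j}$, where $V:=\int_X\o^n$. Integration by parts gives $\int_X v(\o_v^j\wedge\o^{n-j}-\o_v^{j-1}\wedge\o^{n-j+1})=-\int_X i\partial v\wedge\bar\partial v\wedge\o_v^{j-1}\wedge\o^{n-j}\le 0$, hence the standard sandwich $\frac1V\int_X v\,\o_v^n\le E(v)\le\frac1V\int_X v\,\o^n\le\sup_X v$; the right-hand inequality already gives $E(u_{k_l})\le D$, so the issue is a lower bound, equivalently an upper bound for $\int_X(-u_{k_l})\,\o_{u_{k_l}}^n$. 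Writing $\o_{u_{k_l}}^n=f_l\,\o^n$ we have $\int_X f_l\,\o^n=V$ and $\int_X f_l\log f_l\,\o^n\le D$, and by the uniform form of Skoda's integrability theorem (the uniform companion of Theorem \ref{Skoda}; see \cite{ze}) there are $\alpha_0,C_0>0$ depending only on $(X,\o)$ with $\int_X e^{-\alpha_0(v-\sup_X v)}\o^n\le C_0$ for every $v\in\textup{PSH}(X,\o)$. Applying Young's inequality $st\le e^s+t\log t-t$ ($s\in\R$, $t\ge0$) with $s=\alpha_0(-u_{k_l}+\sup_X u_{k_l})$ and $t=f_l$ and integrating yields
\[
\alpha_0\int_X(-u_{k_l}+\sup_X u_{k_l})\,f_l\,\o^n\le\int_X e^{\alpha_0(-u_{k_l}+\sup_X u_{k_l})}\,\o^n+\int_X f_l\log f_l\,\o^n-V\le C_0+D-V,
\]
and since $-\sup_X u_{k_l}\le D$ this bounds $\int_X(-u_{k_l})\,\o_{u_{k_l}}^n$, hence $|E(u_{k_l})|$, by a constant $C_1$ independent of $l$.

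It remains to pass this bound to the limit. After a further (relabelled) subsequence we may assume $u_{k_l}\to u$ also $\o^n$-a.e.; set $\phi_l:=\bigl(\sup_{m\ge l}u_{k_m}\bigr)^*\in\textup{PSH}(X,\o)$ (the upper envelope is $\le D$). Then $\phi_l\downarrow u$ and $\phi_l\ge u_{k_l}$, so, using the monotone extension $E(w):=\inf\{E(v):v\in\mathcal E^1(X,\o),\ v\ge w\}$, monotonicity gives $E(\phi_l)\ge E(u_{k_l})\ge-C_1>-\infty$, whence $\phi_l\in\mathcal E^1(X,\o)$ by the Guedj--Zeriahi characterization of the finite-energy class. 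Since $\phi_l\downarrow u$ and $E$ is continuous along decreasing sequences, $E(u)=\lim_l E(\phi_l)\ge-C_1>-\infty$, so $u\in\mathcal E^1(X,\o)$, completing the proof. (Alternatively, the estimate of the previous paragraph shows $d_1(0,u_{k_l})$ is bounded via \eqref{doubleest}, and one concludes using the $L^1$-lower semicontinuity of $d_1(0,\cdot)$ together with the fact that $d_1(0,u)<\infty$ characterizes $\mathcal E^1(X,\o)$.)

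The main obstacle is the second step, converting the entropy bound into the energy bound; its content is the uniform exponential integrability $\int_X e^{-\alpha_0(v-\sup_X v)}\o^n\le C_0$ over all sup-normalized $\o$-psh functions, which is slightly stronger than Theorem \ref{Skoda} as literally stated there (the closure of $\{u_{k_l}\}$ may contain functions with positive Lelong numbers), but is the standard uniform version of Skoda's theorem. Once that and the Young inequality are in place the remaining steps are comparatively soft, though the last one does rely on the fine structure of $\mathcal E^1(X,\o)$ — the monotonicity and decreasing-continuity of the extended energy and the equivalence $w\in\mathcal E^1(X,\o)\Leftrightarrow E(w)>-\infty$ — all of which is available in \cite{bbegz} and the references therein.
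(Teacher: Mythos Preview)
Your argument is correct and is essentially the proof in \cite{bbegz} that the paper is quoting; the paper itself does not give a proof but only records the statement with the remark that it follows from Theorem~\ref{Skoda} and \eqref{doubleest}. Your alternative route via a uniform $d_1$-bound and lower semicontinuity is precisely what that remark is pointing at, while your main argument (uniform Skoda plus the Young inequality $st\le e^s+t\log t-t$, then monotone passage to the limit) is the standard BBEGZ proof.

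One small clarification worth making explicit (you already flag it): the input you use is the \emph{uniform} Skoda estimate $\int_X e^{-\alpha_0(v-\sup_X v)}\o^n\le C_0$ valid for all $v\in\textup{PSH}(X,\o)$ with a fixed small $\alpha_0>0$, which is the classical statement from \cite{ze}, rather than Theorem~\ref{Skoda} as literally stated (which assumes zero Lelong numbers to get all exponents $\alpha$). This is exactly what is needed, and is what underlies both the paper's one-line indication and the cited results in \cite{bbegz}.
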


Actually, we also have $\lim_{l \to \infty} d_1(u_{k_l},u)=0$ in the above theorem, but this will not be important for us. The crucial fact here is that the elements of $\mathcal E^1(X,\o)$ have zero Lelong numbers \cite[Corollary 1.8]{gz}. Now we are ready to prove our main result in this section. 

\begin{theorem}\label{main}Let $\mathcal L \subset \mathcal H$ for which there exists $C>0$ satisfying:
$$\mathcal I(\o, \o_\phi) \leq C, \ Ric \o_\phi \leq C \o_\phi, \ \phi \in \mathcal L.$$
Then there exists $C'(\mathcal L) >0$ such that:
$$0 \leq \o +i\partial\bar\partial \phi \leq C' \o, \ \phi \in \mathcal L.$$
\end{theorem}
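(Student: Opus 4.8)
The plan is to run a chain of a priori estimates whose constants are all uniform over $\cL$. Throughout we may normalize $\sup_X\phi=0$, since both $\cI(\o,\o_\phi)$ and $\textup{Ric}\,\o_\phi$ are unchanged under $\phi\mapsto\phi+c$; write $F:=\log(\o_\phi^n/\o^n)$, so $\o_\phi^n=e^F\o^n$.

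\textbf{Step 1 (an $\o$-psh auxiliary potential and entropy bound).} Fix $A>0$ with $\textup{Ric}\,\o-C\o\geq -A\o$ on $X$. Since $\textup{Ric}\,\o_\phi=\textup{Ric}\,\o-\i\p\bp F$, the hypothesis $\textup{Ric}\,\o_\phi\leq C\o_\phi$ reads $\i\p\bp(F+C\phi)\geq \textup{Ric}\,\o-C\o\geq -A\o$, i.e. $\psi_\phi:=A^{-1}(F+C\phi)\in\textup{PSH}(X,\o)$. Moreover, as $\phi\leq 0$,
\[\int_X e^{A\psi_\phi}\o^n=\int_X e^{C\phi}\o_\phi^n\leq\int_X\o_\phi^n=V.\]
Since the normalized class $\{v\in\textup{PSH}(X,\o):\sup_X v=0\}$ is $L^1$-compact and $v\mapsto\int_X e^{Av}\o^n$ is continuous and strictly positive there, a scaling argument yields a uniform $M$ with $\sup_X\psi_\phi\leq M$, equivalently $F\leq AM-C\phi$. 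Using again $L^1$-boundedness of the normalized class, $\int_X(-\phi)\o^n\leq C_0$; expanding $\cI(\o,\o_\phi)=\int_X(-\phi)(\o_\phi^n-\o^n)$ then gives $\int_X(-\phi)\o_\phi^n\leq C+C_0$, and therefore the entropy is controlled:
\[\int_X F\,\o_\phi^n=A\int_X\psi_\phi\,\o_\phi^n-C\int_X\phi\,\o_\phi^n\leq AMV+C(C+C_0)=:D_1.\]

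\textbf{Step 2 (uniform exponential integrability and $C^0$ bound).} By the entropy bound and Theorem \ref{bbegzcomp}, every $L^1$ limit point of $\cL$ lies in $\cE^1(X,\o)$, hence has zero Lelong numbers; the members of $\cL$ are smooth, so they too have zero Lelong numbers. Thus the $L^1$-closure $\overline{\cL}\subset\textup{PSH}(X,\o)$ is $L^1$-compact with all Lelong numbers zero, and Theorem \ref{Skoda} gives, for each $\alpha>0$, a constant $C(\alpha)$ with $\int_X e^{-\alpha\phi}\o^n\leq C(\alpha)$ for all $\phi\in\cL$. Now from Step 1, $f:=\o_\phi^n/\o^n=e^F\leq e^{AM}e^{-C\phi}$, so $\log(1+f)\leq C_1(1-\phi)$ and hence $f(\log(1+f))^p\leq C_{2,p}\,e^{-2C\phi}$ for every $p>0$; integrating and applying the bound with $\alpha=2C$ yields $\int_X f(\log(1+f))^p\o^n\leq C_{2,p}\,C(2C)$, uniformly in $\phi\in\cL$, for all $p>n$. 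Theorem \ref{Kolotheorem}, applied to $\o_\phi^n=e^F\o^n$ (with the fixed normalization $\int_X e^F\o^n=V$), then gives $\textup{osc}_X\phi\leq D_2$; since $\sup_X\phi=0$ this is $\sup_X|\phi|\leq D_2$, uniformly over $\cL$. Together with Step 1 this also yields $F\leq AM+CD_2=:D_3$.

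\textbf{Step 3 (Laplacian estimate).} With $u:=\tr_\o\o_\phi$, the classical Aubin--Yau inequality for $\textup{id}\colon(X,\o_\phi)\to(X,\o)$ reads
\[\D_{\o_\phi}\log u\geq -B\,\tr_{\o_\phi}\o-\frac{B'}{u}+\frac{\D_\o F}{u},\]
where $B,B'$ depend only on a lower bound for the bisectional curvature of $\o$. The Ricci upper bound gives $\D_\o F=\tr_\o(\textup{Ric}\,\o-\textup{Ric}\,\o_\phi)\geq -A_1-Cu$, so applying the maximum principle to $G:=\log u-\lambda\phi$ with $\lambda:=\max(B+1,C)$, and using $\D_{\o_\phi}\phi=n-\tr_{\o_\phi}\o$ together with the pointwise inequality $u\leq\frac{1}{(n-1)!}(\tr_{\o_\phi}\o)^{n-1}e^F$, bounds $u$ at the maximum point of $G$ in terms of $D_3$; feeding back the $C^0$ bound $|\phi|\leq D_2$ from Step 2 produces $\tr_\o\o_\phi\leq C'$ on $X$, uniformly over $\cL$. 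Since $\o_\phi>0$ for $\phi\in\cH$, this is exactly $0\leq\o+\i\p\bp\phi\leq C'\o$, as desired.

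\textbf{Expected main obstacle.} The heart of the argument is Steps 1--2: the observation that the Ricci \emph{upper} bound forces $F+C\phi$ to be a fixed multiple of an $\o$-psh function with a uniform sup bound, converting the $\cI$-bound into an entropy bound, and then --- through Theorem \ref{bbegzcomp} and Theorem \ref{Skoda} --- upgrading this to \emph{uniform} exponential integrability of the whole family $\cL$, which is precisely the input Kolodziej's estimate requires. Once the $C^0$ and volume-upper bounds are available, Step 3 is the standard Yau-type second-order computation; the only nonstandard point there is that control of $\D_\o F$ comes from the Ricci upper bound rather than from any $C^2$ control of the right-hand side of a Monge--Amp\`ere equation.
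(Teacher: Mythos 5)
Your proposal is correct and follows essentially the same route as the paper: the Ricci upper bound makes $F+C\phi$ (a multiple of) an $\o$-psh function with a uniform sup bound, the $\mathcal I$-bound converts this into an entropy bound, which via Theorem \ref{bbegzcomp}, zero Lelong numbers and Theorem \ref{Skoda} yields uniform exponential integrability and then Kolodziej's $C^0$ estimate, followed by a Yau-type Laplacian estimate using the Ricci upper bound to control $\Delta_\o F$. The only differences are cosmetic (Jensen/sup-mean versus your exponential-integral compactness argument for the sup bound, $L^p$ versus $f\log(1+f)^p$ verification, and the $\log\tr_\o\o_\phi$ form of the second-order estimate instead of Yau's $e^{-C_3\phi}(n+\Delta_\o\phi)$ quantity).
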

\begin{proof} Without loss of generality we can assume that $\int_X \o^n =1$.
First we establish the $C^0$ bound. We can suppose that $\sup_X \phi =0$ for all $\phi \in \mathcal L$. This implies that $\int_X \phi \o^n$ is uniformly bounded, hence by the $\mathcal I$ functional bound, also $\int_X \phi \o_{\phi}^n$ is uniformly bounded.

Let $F_\phi= \log(\o_\phi^n/\o^n)$ for $\phi \in \mathcal L$. We have
\begin{equation}\label{secondderivest}
Ric \ \o_\phi -Ric \ \o = -\i \partial \bar \partial F_\phi.
\end{equation}
We can suppose that $-Ric \ \o \leq C\o$ and $Ric \ \o_\phi \leq C \o_\phi$. Hence we have \[C(\o + \o_\phi)\geq -i\partial\bar\partial F_\phi.\] 
It follows that $\frac{1}{2}\phi + \frac{1}{2C}F_\phi\in \mathcal H$.  By Jensen's inequality, we also have that
$$ \int_X F_\phi \o^n \leq \log \int_X \frac{\o_u^n}{\o^n} \o^n=0.$$
Putting these facts together we obtain that there exists $D>0$ such that $ \sup_X (\frac{1}{2}\phi + \frac{1}{2C}F_\phi) \approx \int_X (\frac{1}{2}\phi + \frac{1}{2C}F_\phi)\o^n  \leq 0$, hence
\begin{equation}\label{trivest}F_\phi+C\phi  \leq D,
\end{equation}
for some $D>0$, which in turn implies that
\begin{equation}\label{trivexpest}
\o_\phi^n \leq D' e^{-C\phi}\o^n.
\end{equation}
Next we claim that the weak $L^1$ closure of $\mathcal L$ is a compact family (in the weak $L^1$ topology of $\textup{PSH}(X,\o)$) with zero Lelong numbers. Compactness is guaranteed by the fact that $\sup_X \phi =0$ for all $\phi \in \mathcal L$. To verify the condition on zero Lelong numbers, let $\phi_k \in \mathcal L$ such that $\phi_k \to_{L^1} \phi \in \overline {\mathcal L} \subset \textup{PSH}(X,\o)$. If we can argue that $\phi \in \mathcal E^1(X,\o)$, then we are done as elements of $\mathcal E^1(X,\o)$ have zero Lelong numbers \cite[Corollary 1.8]{gz}. But this follows as the conditions of Theorem \ref{bbegzcomp} are verified. Indeed, we have that
$$ \int_X \log \frac{\o_{\phi_k^n}}{\o^n} \o^n_{\phi_k} = \int_X F_{\phi_k} \o_{\phi_k}^n\leq \int_X (D/c - \phi_k/c) \o^n_{\phi_k} \leq E,$$
where the last estimate follows from our choice of normalization at the beginning of the proof. Hence, we can apply Theorem \ref{bbegzcomp} which gives the claim. Using the claim and Theorem \ref{Skoda} we conclude that for any $\alpha >0$ there exists $C(\alpha, \mathcal L) >0$ such that
\begin{equation}\label{Skodaest}
\int_X e^{-\alpha \phi} \o^n \leq C, \ \phi \in \mathcal L.
\end{equation}
For any $p \geq 1$, we can start to write:
\begin{flalign*}
\int_X \Big(\frac{\o^n_\phi}{\o^n}\Big)^p\o^n  \leq D \int_X e^{-Cp\phi}\o^n \leq  C(p,\mathcal L), \ \phi \in \mathcal L,
\end{flalign*}
where we have used \eqref{trivexpest} and  \eqref{Skodaest}. Finally, by choosing $p \geq 2$, we can apply Kolodziej's estimates (Theorem \ref{Kolotheorem}) to conclude the proof of the uniform $C^0$ estimate.

With the $C^0$ bound in hand, the bound on $\Delta_\o \phi$ is derived using Yau' techniques \cite{y}. Fix a large constant $C_3>0$, that will eventually be under control. At the point $p \in X$ where $\exp(-C_3\phi)(n+\Delta_\o \phi)$ is maximized we obtain:
\begin{eqnarray*}
\Delta_{\o_\phi}\left\{\exp(-C_3\phi)(n+\Delta
\phi)\right\}(p)\leq 0.\end{eqnarray*} 
Then, using normal coordinates, Yau's calculation yields:
\begin{eqnarray}\label{yau}0&\geq& \Delta
F_\phi-n^2\inf_{i\neq
l}R_{i\bar{i}l\bar{l}}-C_3n(n+\Delta_\o \phi)\nonumber\\
&&\quad+\left(C_3+\inf_{i\neq
l}R_{i\bar{i}l\bar{l}}\right)\exp\left\{\frac{-F}{n-1}\right\}(n+\Delta
\phi)^{n/(n-1)}.\end{eqnarray} 
Using \eqref{secondderivest}, we continue to write
\begin{eqnarray}0&\geq&
-C\Delta_\o \phi-C_2-n^2\inf_{i\neq
l}R_{i\bar{i}l\bar{l}}-C_3n(n+\Delta \phi)\nonumber\\
&&\quad+\left(C_3+\inf_{i\neq
l}R_{i\bar{i}l\bar{l}}\right)\exp\left\{\frac{-F}{n-1}\right\}(n+\Delta_\o
\phi)^{n/(n-1)}.\end{eqnarray} This implies that
$(n+\Delta_\o\phi)(p)$ has an upper bound $C_0$ depending only
on $\sup_X F$. It follows that
\begin{eqnarray*} \exp(-C_3\phi)(n+\Delta_\o \phi) &\leq&
\exp(-C_3\phi)(n+\Delta_\o \phi)(p)\\
&\leq& C_0\exp(-C_3\phi(p)).\end{eqnarray*} Because $\phi$
is uniformly bounded, we obtain \begin{equation}\label{5-5}
0<n+\Delta \phi \leq C(C_3, \sup_X F_\phi, X).\end{equation}
Hence $\Delta_\o \phi$ is uniformly bounded. 
\end{proof}
As the next example shows, it is not possible to obtain a lower bound for the metrics without further assumptions. Let $X$ be the torus $\Bbb C / (\Bbb Z + i \Bbb Z)$ with the flat metric $\o = i dz \wedge \bar{dz}$. For small enough $\varepsilon >0$ let $\alpha_\varepsilon: X \to \Bbb R$ be functions satisfying the following properties: $0 <  \alpha_\varepsilon <2$, $\int_X \alpha_\varepsilon \o=1$, $\alpha_\varepsilon\big|_{B(0,1/6)}=\varepsilon + |z|^2$  and $\alpha_\varepsilon\big|_{X\setminus B(0,1/4)}$ is independent of $\varepsilon$. Because $\dim(X) =1$, there exists $\beta_\varepsilon \in \mathcal H$ such that $\o_{\beta_\varepsilon} = \alpha_\varepsilon \o$. For the Ricci curvature of $\o_{\beta_\varepsilon}$ we have
$$Ric \ \o_{\beta_\varepsilon}\Big|_{B(0,1/6)} = - \i \partial \bar \partial \log \alpha_\varepsilon \Big|_{B(0,1/6)} = -i\frac{\varepsilon}{(\varepsilon + |z|^2)^2}dz \wedge \bar{dz} \leq 0.$$
Using this, one can see that $Ric \ \o_{\beta_\varepsilon}$ is in fact uniformly bounded above on $X$.
Hence, for small enough $\varepsilon$ we obtained a family of metrics $\{ \o_{\beta_\varepsilon}\}_{\varepsilon>0}$ such that $\mathcal I(\o,\o_{\beta_\varepsilon})$ and $Ric \ \o_{\beta_\varepsilon}$ are uniformly bounded above but $\o_{\beta_\varepsilon}$ is not uniformly bounded away from zero.

Combining the conclusion of our last result with the compactness theorem \cite[Theorem 5.1]{ch1}, we obtain the following corollary:
\begin{corollary}\label{cor1}Let $\mathcal L \subset \mathcal H$ for which there exists $C>0$ satisfying:
$$\mathcal I(\o, \o_\phi) \leq C, \  - C \o_\phi \leq Ric \ \o_\phi \leq C \o_\phi, \ \phi \in \mathcal L.$$
Then for any $\alpha \in [0,1)$ there exists $C'(\mathcal L) >1$ such that:
$$  \frac{1}{C'} \o \leq \o_\phi \leq C'\o, \  \| \phi\|_{C^{3,\alpha}} \leq C', \ \phi \in \mathcal L.$$
\end{corollary}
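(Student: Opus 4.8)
The plan is to combine the upper bound just established in Theorem \ref{main} with the Chen--He compactness result Theorem \ref{tch} (that is, \cite[Theorem 5.1]{ch1}). The only genuinely analytic ingredient, the uniform $C^0$ estimate, has already been carried out inside the proof of Theorem \ref{main}, so what remains is to assemble the pieces.

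As in the proof of Theorem \ref{main}, I would first pass to the standard normalization $\sup_X\phi=0$, which changes neither $\o_\phi$, nor $Ric \ \o_\phi$, nor $\mathcal I(\o,\o_\phi)$ (for the last, using $\int_X\o^n=\int_X\o_\phi^n$). The family $\mathcal L$ then satisfies the hypotheses of Theorem \ref{main}, because $-C\o_\phi\le Ric \ \o_\phi\le C\o_\phi$ gives in particular $Ric \ \o_\phi\le C\o_\phi$. Hence Theorem \ref{main} provides $D=D(\mathcal L)>1$ with $0\le\o_\phi\le D\o$ for all $\phi\in\mathcal L$, and, reading off the first step of its proof (where Kolodziej's estimate Theorem \ref{Kolotheorem} is applied), a uniform bound $\|\phi\|_{C^0}\le D$ over $\mathcal L$.

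Next I would feed these into Theorem \ref{tch}. Combining the upper metric bound $\o_\phi\le D\o$ with the two-sided Ricci hypothesis $-C\o_\phi\le Ric \ \o_\phi\le C\o_\phi$ yields $-CD\o\le Ric \ \o_\phi\le CD\o$, so the Ricci curvature of $\o_\phi$ is uniformly bounded relative to the fixed metric $\o$; together with $\|\phi\|_{C^0}\le D$, the hypotheses of Theorem \ref{tch} are met. That theorem then delivers $C'=C'(\mathcal L)>1$ with $\frac{1}{C'}\o\le\o_\phi\le C'\o$ and $\|\phi\|_{C^{3,\alpha}}\le C'$ for every $\alpha\in(0,1)$; the case $\alpha=0$ is immediate since $\|\phi\|_{C^3}\le\|\phi\|_{C^{3,\alpha}}$.

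I do not expect a real obstacle here --- the substance is contained in Theorem \ref{main} and in \cite{ch1}. The one point needing care is the passage from the $\o_\phi$-relative Ricci bound in the hypothesis to an $\o$-relative bound, which is precisely where the upper metric bound $\o_\phi\le D\o$ of Theorem \ref{main} is used; if one instead reads Theorem \ref{tch} as already allowing a hypothesis of the form $|Ric \ \o_\phi|_{\o_\phi}\le C$, then only the $C^0$ conclusion of Theorem \ref{main} is needed and this conversion step can be omitted.
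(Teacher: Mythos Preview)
Your proposal is correct and follows exactly the approach indicated by the paper, which simply states that the corollary follows by combining Theorem \ref{main} with the Chen--He compactness theorem \cite[Theorem 5.1]{ch1}. Your added care about converting the $\o_\phi$-relative Ricci bound into an $\o$-relative one via the upper metric bound is a sensible detail that the paper leaves implicit.
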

We note that this last result also improves \cite[Theorem 1]{sz}, where instead of bounded Ricci curvature the author assumes boundedness of the full curvature tensor, via a result of Schoen-Uhlenbeck \cite{su}.




\begin{rmk}
It would be very interesting to understand the compactness of potentials with weaker curvature conditions, such as replacing Ricci curvature by scalar curvature. We ask: is a family of K\"ahler metrics in a fixed K\"ahler class with uniformly bounded scalar curvature and uniformly bounded potential compact (say in $C^{1, \alpha}$ topology)? Namely, can we obtain second order estimates on the potential assuming scalar curvature bound and $C^0$  bound? 
\end{rmk}

\section{Applications to the Calabi flow}

We study applications to the Calabi flow in this section. Recall that in the presence of bounded Ricci curvature, the first and third named authors proved the long time existence of the Calabi flow.

\begin{proof}[Proof of Theorem \ref{thm: Calabif}] First we prove part (i). If $\phi \in \mathcal H$ is cscK then by the distance shrinking property of the Calabi flow and \eqref{AubinIest} we obtain that $$\mathcal I(\o_\phi,\o_{c_t}) \leq Cd_2(\phi,c_t)\leq Cd_2(\phi,c_0), \ t \geq 0.$$
By \cite[Corollary 4]{da} we have additionally that $ |\sup_X c_t| $ is bounded by $d_2(\phi,c_t)$. Given all this and the Ricci curvature bound, we can apply the compactness theorem of the previous section to conclude that $\{c_t \}_t$ is $C^{3,\alpha}$-compact. Hence there exists a $C^{3, \alpha}$ K\"ahler potential $c_\infty$ that minimizes the K-energy. Using \cite[Theorem 1.7]{HZ} (or more generally \cite[Theorem 1.1]{bdl}), we conclude that $c_\infty$ is in fact a smooth cscK potential.

Part (ii) is argued in a similar way. Using the formalism of \cite{DR}, properness of the K-energy simply means that $\mathcal K(u) \geq C d_1(0,u) - D$ for all $u \in \mathcal H$ and some $C,D>0$ (See \cite[Proposition 5.5]{DR}). 

 As $t \to c_t$ decreases the Mabuchi K-energy, it follows that $d_1(0,{c_t})$ is bounded. Using again \cite[Corollary 4]{da} we get that $|\sup_X c_t|$ is bounded. We can now apply our compactness result to conclude the argument, as in part (i).
\end{proof}

\section{Applications to the method of continuity}

In this section, we denote by $\psi_t, t \in [0,1]$ the potential solutions to the twisted cscK equation along the continuity path \eqref{CSCK}. It is well known that such $\psi_t$ minimize the twisted K-energy:
$$\mathcal K_t(u)=\mathcal K(u) + \frac{1-t}{t}\mathcal J(u), \ u \in \mathcal H,$$
where $\mathcal K$ is Mabuchi's K-energy and $\mathcal J$ is the following functional:
$$\mathcal J(u) = \frac{1}{V }\sum_{j=0}^{n-1}\int_X u \o^j \wedge \o_u^{n-1-j} - \frac{n}{(n+1) V }\sum_{j=0}^{n} \int_X u \o^j \wedge \o_u^{n-j},  \ \ u \in \mathcal H.$$

Finally, we give the argument of our last main result:

\begin{proof}[Proof of Theorem \ref{estimate}] 
Since $\mathcal J \geq 0$ it follows that $\mathcal K_{t_1} \geq \mathcal K_{t_2}$ for any $t_1,t_2 \in (0,T)$ with $t_1 \leq t_2$. In particular, since $\psi_t$ minimizes $\mathcal K_t$, it follows that $t \to \mathcal K_{t}(\psi_t)$ is decreasing for $t \in [0,T)$. We know that $\mathcal K$ is proper, i.e., $\mathcal K(\cdot) \geq C \mathcal J(\cdot) - D$ for some $C,D>0$. Putting everything together we obtain that $\mathcal J(\psi_t)$ is bounded for $t \in [0,T)$, hence so is $\mathcal I(\o,\o_{\psi_t})$.

By Theorem \ref{main}(i),  $\Delta_\o \psi_t$ has to be uniformly bounded along the continuity path. On the other hand, using Theorem \cite[Theorem 1.7]{HZ} we obtain that under such circumstance $T=1$ and a smooth cscK metric exists.
\end{proof}

\paragraph{Acknowledgements.} The  first named author has been partially supported by NSF grant DMS--1515795. The second named author has been partially supported by NSF grant DMS--1610202 and BSF grant 2012236. The third named author has been partially supported by NSF grant DMS--1611797.

\normalsize
\noindent{\sc Stony Brook University}\\
{\tt xiu@math.sunysb.edu}\vspace{0.1in}\\
\noindent{\sc University of Maryland}\\
{\tt tdarvas@math.umd.edu}\vspace{0.1in}\\
\noindent {\sc University of Oregon}\\
{\tt whe@uoregon.edu }
\end{document}